\numberwithin{equation}{section} 
\numberwithin{figure}{section} 
  \theoremstyle{plain}
  \newtheorem{thm}{Theorem}[section]
  \theoremstyle{plain}
  \theoremstyle{plain}
  \theoremstyle{Remark}
  \theoremstyle{remark}
  \theoremstyle{plain}
  \newtheorem{lem}[thm]{Lemma}
\def\<{{\langle }}
\def\>{{\rangle }}
\def\<{{\langle }}
\def\>{{\rangle }}
\begin{document}

\title[index minimal hypersurfaces]{On the index of minimal hypersurfaces of spheres}

\author{ Oscar M. Perdomo }

\date{\today}



\subjclass[2000]{53A10, 53C42}

\maketitle

\begin{abstract}

Let $M\subset S^{n+1}\subset\mathbb{R}^{n+2}$ be a compact minimal hypersurface of the $n$-dimensional Euclidean unit sphere. Let us denote by $|A|^2$ the square of the norm of the second fundamental form and $J(f)=-\Delta f-nf-|A|^2f$ the stability operator. It is known that the index (the number of negative eigenvalues of $J$) is 1 when $M$ is a totally geodesic sphere, and it is $n+3$ when $M$ is a Clifford minimal hypersurface. It has been conjectured that for any other minimal hypersurface, the index must be greater than $n+3$. One partial result for this conjecture states that if the index is $n+3$ and $M$ is not Clifford, then $\int_M |A|^2<n|M|$ where $|M|$ is the $n$ dimensional volume of $M$. Somehow this partial result states that if the index of $M$ is $n+3$  then the average of the function $|A|^2$ needs to be small. In this note we prove that this average cannot be very small. We will show that for any pair of positive numbers $\delta_1$ and $\delta_2$ with $\delta_1+\delta_2=1$,  if $\int_M |A|^2\le \delta_2 n|M|$ and $|A|^2(x)\le2n\delta_1$ for all $x\in M$, then the index of $M$ is greater than $n+3$. 
\end{abstract}

\section{Introduction}

The stability index of minimal hypersurfaces of spheres plays an important role in the understanding of the whole theory of minimal hypersurfaces. An examaple that illustrates this fact is the proof of the Willmore conjecture by Fernando C. Marques and Andr\'e Neves \cite{MN}. They not only proved the Willmore conjecture but also showed that the area of minimal surfaces on $S^3$ jumps from $4\pi$, when $M$ is a minimal totally geodesic sphere, to $2\pi^2$ when $M$ is the Clifford torus. That is, they proved that the area of any minimal surface that is neither totally geodesic nor Clifford must be greater than $2\pi^2$. One of many tools needed to show their result is the Theorem by Urbano \cite{U} where he shows that  if the index of a minimal surface is $5$, then this surfaces must be the Clifford torus. If we use the same techniques in Urbano's paper we obtain that if the index of an $M\subset S^{n+1}$ is $n+3$, then $\int_M|A|^2\le n|M|$ with equality only if $M$ is a Clifford hypersurface, \cite{P}. In this note we will show that for any pair of positive numbers $\delta_1$ and $\delta_2$ with $\delta_1+\delta_2=1$,  if $\int_M |A|^2\le n \delta_2|M|$ and $|A|^2(x)\le2n\delta_1$ for all $x\in M$, then the index of $M$ is greater than $n+3$. An observation that is related with the main result shown here is the one that, taking the coordinates of the Gauss map as test functions, we can easy show that if  $M$ has antipodal symmetry and the first nonzero eigenvalue of the Laplacian of $M$ is $n$, then  $\int_M|A|^2\ge n|M|$ with equality if and only if $M$ is Clifford.


\section{Proof of the main theorem}

Let us assume that $M\subset S^{n+1}\subset\mathbb{R}^{n+2}$ is a compact minimal hypersurface of the $n$-dimensional Euclidean unit sphere and let us denote by $\nu:M\to S^{n+1}$ a Gauss map.  For any $v\in\mathbb{R}^{n+2}$ let us define by $l_v:M\longrightarrow \mathbb{R}$ and $f_v:M\longrightarrow \mathbb{R}$ the functions given by $  l_v(x)=\<x,v\>$ and $  f_v(x)=\<\nu(x),v\>$. A direct verification using the Codazzi equations gives us that

\begin{eqnarray}
\Delta l_v=-nl_v,\quad \Delta f_v=-|A|^2f_v, \quad
\end{eqnarray}
and
\begin{eqnarray}
\nabla l_v(x)=v^T=v-f_v(x) \nu(x)-l_v(x) x, \qquad \nabla f_v(x)=-A(v^T)
\end{eqnarray}

Where $A_x:T_xM\longrightarrow T_xM$, given by $A_x(w)=-D_w\nu$ denotes the shape operator.

We will be denoting by $\rho:M\longrightarrow \mathbb{R}$ the first eigenfunction of the operator $J$. We have that $J(\rho)=\lambda_1 \rho$. It is known that if $M$ is not a totally geodesic sphere, then $\lambda_1\le -2n$ with equality only if $M$ is Clifford. See \cite{S} and \cite{P1}.

\begin{lem}Let $M\subset S^{n+1}\subset\mathbb{R}^{n+2}$ be a compact minimal oriented hypersurface of the $n$-dimensional Euclidean unit sphere. If $M$ is neither totally umbilical nor Clifford, then the vector space 

$$\Gamma = \{ a \rho+f_w+l_v: a\in\mathbb{R}, \, w,v \in \mathbb{R}^{n+2}\}$$

has dimension $2 n+5$.

\end{lem}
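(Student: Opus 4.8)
The plan is to get the upper bound $\dim\Gamma\le 2n+5$ for free and then reduce everything to linear independence. Since $\Gamma$ is the image of the linear map $(a,w,v)\mapsto a\rho+f_w+l_v$ from $\mathbb{R}\times\mathbb{R}^{n+2}\times\mathbb{R}^{n+2}$, its dimension is at most $1+(n+2)+(n+2)=2n+5$, so it suffices to prove this map is injective. Thus I assume a relation $a\rho+f_w+l_v\equiv 0$ on $M$ and aim to conclude $a=0$, $w=0$, $v=0$. The engine is that all three generators are (near-)eigenfunctions of $J$: from $\Delta l_v=-nl_v$ and $\Delta f_w=-|A|^2f_w$ one computes $J(l_v)=-|A|^2l_v$ and $J(f_w)=-nf_w$, while $J(\rho)=\lambda_1\rho$. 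Applying $J$ to the relation and adding $n$ times the relation itself eliminates $f_w$ and yields the single pointwise identity
\begin{equation}
a(\lambda_1+n)\,\rho=(|A|^2-n)\,l_v .
\end{equation}
Because $M$ is not totally geodesic we have $\lambda_1\le-2n$, so the constant $\lambda_1+n$ is strictly negative; this is what makes the identity usable.

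Next I would exploit the sign of $\rho$. Being a first eigenfunction, $\rho$ does not change sign, say $\rho>0$. If $a\neq0$ the displayed identity forces $l_v$ to be nowhere zero, since a zero of $l_v$ would be a zero of $\rho$. But $\int_M l_v=-\tfrac1n\int_M\Delta l_v=0$, so the continuous function $l_v$ must vanish somewhere on the connected manifold $M$ unless $l_v\equiv0$; either case contradicts $\rho>0$. Hence $a=0$, and the identity collapses to $(|A|^2-n)\,l_v\equiv0$. Here the non-Clifford hypothesis enters: if $|A|^2\equiv n$ then the classification of compact minimal hypersurfaces with $|A|^2\equiv n$ (Chern--do Carmo--Kobayashi) would force $M$ to be Clifford. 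Therefore $|A|^2\not\equiv n$, so $U=\{x:|A|^2(x)\neq n\}$ is a nonempty open set on which $l_v$ vanishes. Since a minimal hypersurface of a real-analytic ambient manifold is real-analytic, $l_v$ is real-analytic, so vanishing on the nonempty open set $U$ forces $l_v\equiv0$ on $M$. Fullness of $M$ (it cannot lie in a hyperplane, else $M$ would be an equatorial $S^n$ and hence totally geodesic) then gives $v=0$.

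It remains to show $w=0$, and the injectivity of $w\mapsto f_w$ is the step I expect to be the main obstacle. With $a=0$ and $l_v\equiv0$ the relation reads $f_w=-l_v\equiv0$, i.e. $\langle\nu,w\rangle\equiv0$. Differentiating gives $0=\nabla f_w=-A(w^T)$, so $w^T\in\ker A$; writing $w=w^T+l_w\,x$ and differentiating $D_Xw=0$ with the Gauss formula then yields $\Hess\,l_w=-l_w\,g$. Unless $w=0$ the function $l_w$ is nonconstant, so Obata's theorem forces $M$ to be isometric to the round $S^n(1)$, whence the Gauss equation for a minimal hypersurface gives $|A|^2=0$, i.e. $M$ is totally geodesic, contradicting the hypothesis that $M$ is not totally umbilical. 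Hence $w=0$, the defining map is injective, and $\dim\Gamma=2n+5$. The two places where rigidity is genuinely needed are exactly the degenerate loci where the clean $J$-eigenvalue separation breaks down: the set $|A|^2=n$ (handled by the non-Clifford hypothesis together with analyticity) and the vanishing of a Gauss-map test function (handled by Obata's theorem together with non-umbilicity).
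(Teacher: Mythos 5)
Your proof is correct, and its engine is the same as the paper's: assume a relation $a\rho+f_w+l_v\equiv 0$, apply $J$ to eliminate $f_w$ and obtain $a(\lambda_1+n)\rho=(|A|^2-n)l_v$, then use $\lambda_1\le -2n$ together with the fact that $\rho$ never vanishes while $l_v$ has zero mean to force $a=0$. The genuine difference is in how the two arguments finish. After $a=0$ the paper derives $(n-|A|^2)f_w\equiv 0$ and simply asserts that this contradicts ``$M$ is not totally geodesic or Clifford''; as written this leaves two real gaps, and your proposal is essentially the paper's skeleton with those gaps filled. First, $(|A|^2-n)l_v\equiv 0$ only gives $|A|^2=n$ on the open set where $l_v\neq 0$, and promoting this to $|A|^2\equiv n$ on all of $M$ (so that the Chern--do Carmo--Kobayashi classification yields Clifford) requires the real-analyticity of minimal hypersurfaces, which you invoke explicitly as a unique-continuation argument. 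Second, and more seriously, the alternative $f_w\equiv 0$ with $w\neq 0$ (Gauss map contained in a hyperplane) is not addressed by the paper at all, and it is not vacuous a priori; you dispose of it correctly via $A(w^T)=0$, the Hessian identity $\mathrm{Hess}\,l_w=-l_w\,g$, Obata's theorem, and the Gauss equation, which together force $M$ to be totally geodesic, contradicting non-umbilicity. So your route costs three extra tools (Chern--do Carmo--Kobayashi, analytic unique continuation, Obata) but buys a complete argument; the paper's version is shorter, but its final step is closer to a sketch than a proof.
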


\begin{proof}
Let us argue by contradiction. If the dimension of $\Gamma$ is smaller than $2n+5$ then we can find $a,w,v$ with at least one of them not zero such that  $ a \rho+f_w+l_v$  is the zero function. Let us prove first that $a$ has to be zero. If $a$ is not zero then, taking the operator $J$ we obtain that

$$a\lambda_1 \rho=J(a \rho)= J(-l_v-f_w)=n f_w+|A|^2 l_v=|A|^2 l_v +n(-a\rho-l_v)$$

From the equation above we obtain that 

$$ a(\lambda_1+n)\rho = (|A|^2-n) l_v $$

which is a contradiction because the function $\rho$ is never zero while the function $l_v$ must change sign. To prove the case when $a=0$ is similar. If $l_v+f_w$ is the zero function then taking the laplacian we obtain that $-nl_v=|A|^2f_w$, therefore $nf_w=-nl_v=|A|^2f_w$. Therefore $(n-|A|^2) f_v$ is zero. This is a contradiction because we are assuming that $M$ is not totally geodesic or Clifford.

\end{proof}

\begin{thm}
Let $M\subset S^{n+1}\subset\mathbb{R}^{n+2}$ be a compact minimal oriented hypersurface of the $n$-dimensional Euclidean unit sphere. Let $\delta_1$ and $\delta_2$ be any pair of positive numbers such that  $\delta_1+\delta_2=1$. If  $\int_M |A|^2\le \delta_2 n |M|$ and $|A|^2(x)\le2n\delta_1$ for all $x\in M$, then the index of $M$ is greater than $n+3$.
\end{thm}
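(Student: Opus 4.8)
The plan is to produce, from the ingredients already assembled, an explicit $(n+4)$-dimensional subspace $V\subset\Gamma$ on which the index form
$Q(g)=\int_M g\,J(g)=\int_M\bigl(|\nabla g|^2-ng^2-|A|^2g^2\bigr)$ is negative definite; since the index of $M$ equals the maximal dimension of a subspace on which $Q<0$, this gives index $\ge n+4>n+3$. Throughout I work under the standing assumptions of the Lemma, that $M$ is neither totally geodesic nor Clifford (the hypothesis $\int_M|A|^2\le\delta_2 n|M|<n|M|$ already excludes Clifford), so that $\dim\Gamma=2n+5$, $\lambda_1<-2n$, and $Q(l_v)=-\int_M|A|^2 l_v^2<0$ for all $v\neq0$. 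I write $\langle\cdot,\cdot\rangle$, $\|\cdot\|$ for the $L^2(M)$ inner product and norm, and $B(g,h)=\int_M g\,J(h)$ for the polarization of $Q$.

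First I would record $J(l_v)=-|A|^2 l_v$, $J(f_w)=-nf_w$, $J(\rho)=\lambda_1\rho$, together with two identities coming from self-adjointness. Testing $\int_M\rho\,J(f_w)=\int_M f_w\,J(\rho)$ gives $(\lambda_1+n)\langle\rho,f_w\rangle=0$, hence $\langle\rho,f_w\rangle=0$ for every $w$ (as $\lambda_1<-n$); and integrating $\Delta l_v=-nl_v$ by parts gives $\langle f_w,l_v\rangle=\tfrac1n\int_M|A|^2 f_w l_v$. The first identity is decisive: it yields $B(\rho,f_w)=-n\langle\rho,f_w\rangle=0$, so in the quadratic form below the $\rho$- and $f_w$-directions will decouple with no cross term.

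Next I would select the single vector $w$. Using $\sum_i f_{e_i}^2=|\nu|^2=1$ for an orthonormal basis $e_1,\dots,e_{n+2}$ of $\mathbb{R}^{n+2}$, one gets $\sum_i\int_M|A|^2 f_{e_i}^2=\int_M|A|^2\le\delta_2 n|M|=\delta_2 n\sum_i\int_M f_{e_i}^2$, so the form $w\mapsto\int_M(|A|^2-\delta_2 n)f_w^2$ has nonpositive trace and hence admits $w\neq0$ with $\int_M|A|^2 f_w^2\le\delta_2 n\,\|f_w\|^2$. I then set $V=\mathrm{span}\{\rho,f_w,l_{e_1},\dots,l_{e_{n+2}}\}$, of dimension $n+4$ by the Lemma. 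For $g=a\rho+bf_w+l_v$, the vanishing of $B(\rho,f_w)$ makes the form split with no $ab$ term, $Q(g)=\bigl(\lambda_1\|\rho\|^2a^2-2a\int_M|A|^2\rho l_v\bigr)+\bigl(-n\|f_w\|^2b^2-2bn\langle f_w,l_v\rangle\bigr)-\int_M|A|^2 l_v^2$, so I may maximize over $a$ and over $b$ separately.

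Completing the two squares gives $\max_{a,b}Q(g)=-\int_M|A|^2 l_v^2+\dfrac{\bigl(\int_M|A|^2\rho l_v\bigr)^2}{|\lambda_1|\,\|\rho\|^2}+\dfrac{n\,\langle f_w,l_v\rangle^2}{\|f_w\|^2}$, and I would bound the two correction terms by Cauchy--Schwarz. The first is at most $\theta\int_M|A|^2 l_v^2$ with $\theta=\tfrac{\int_M|A|^2\rho^2}{|\lambda_1|\|\rho\|^2}\le\tfrac{2n\delta_1}{|\lambda_1|}<\delta_1$, where the pointwise bound $|A|^2\le2n\delta_1$ and $|\lambda_1|>2n$ enter; the second, after substituting $\langle f_w,l_v\rangle=\tfrac1n\int_M|A|^2 f_w l_v$ and invoking the chosen $w$, is at most $\delta_2\int_M|A|^2 l_v^2$. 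Summing, $\max_{a,b}Q(g)\le(\theta-\delta_1)\int_M|A|^2 l_v^2<0$ for every $v\neq0$, while $v=0$ is immediate from $\lambda_1\|\rho\|^2a^2-n\|f_w\|^2b^2<0$; hence $Q$ is negative definite on $V$. I expect the one delicate point to be the strictness of the final inequality: it rests on the gap $\theta<\delta_1$, which forces me to use $\lambda_1<-2n$ strictly — exactly the exclusion of the Clifford case — rather than merely $\lambda_1\le-2n$, and the relation $\delta_1+\delta_2=1$ is precisely what lets the two Cauchy--Schwarz bounds combine to absorb the leading term $-\int_M|A|^2 l_v^2$.
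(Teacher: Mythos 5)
Your proposal is correct (modulo the implicit assumption, which the paper also makes, that $M$ is not totally geodesic --- otherwise the statement itself fails, since the equator satisfies both hypotheses but has index $1$) and it takes essentially the same approach as the paper: the same $(n+4)$-dimensional test space spanned by $\rho$, the $l_v$'s and a single $f_{v_0}$ chosen by the identical trace/averaging argument, with the same three ingredients --- $\lambda_1<-2n$, the pointwise bound $|A|^2\le 2n\delta_1$, and the defining property of $v_0$ --- combined through $\delta_1+\delta_2=1$. The only difference is the concluding algebra: you eliminate $a$ and $b$ by maximizing (a Schur-complement step) and then apply Cauchy--Schwarz with weight $|A|^2$, whereas the paper completes squares pointwise with the weights $\delta_1,\delta_2$; the two computations are equivalent ways of absorbing the cross terms, and both rely on the same unstated (but easily verified) fact that $\int_M|A|^2 l_v^2>0$ for every $v\neq 0$.
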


\begin{proof}
For any orthonormal basis $v_1,\dots v_{n+2}$ of $\mathbb{R}^{n+2}$ we have that 

$$  \sum_{i=1}^{n+2}\int_M\left(|A|^2-\delta_2 n\right) \, f_{v_i}^2=\int_M\left(|A|^2-\delta_2 n\right)<0$$

Therefore, we can pick a vector $v_0$ such that 

$$\int_M(|A|^2-n\delta_2)f_{v_0}^2\le 0$$

Once we have chosen this vector $v_0$, we define the $n+4$ dimensional space 

$$\Gamma_0=\{ a \rho+l_w+b f_{v_0}: a,b\in\mathbb{R}, \, w \in \mathbb{R}^{n+2}\quad \}$$

We will prove the theorem by showing that for any $f\in \Gamma_0$ we have that $\int_MfJ(f)<0$. Let us consider $f=a \rho+l_w+bf_{v_0}\in \Gamma_0$. A direct computation shows that 

\begin{eqnarray*}
\int_MfJ(f) &=& a^2 \lambda_1 \int_M\rho^2-\int_M|A|^2l_w^2-nb^2\int_Mf_{v_0}^2-2 b \int_M |A|^2 l_wf_{v_0}-2a \int_M|A|^2 \rho l_w\\ 
&<& -2 a^2 n \int_M\rho^2-\int_M|A|^2l_w^2-nb^2\int_Mf_{v_0}^2-2 b \int_M |A|^2 l_wf_{v_0}-2a \int_M|A|^2 \rho l_w\\ 
&\le&a^2\int_M\rho^2(\frac{|A|^2}{\delta_1}-2 n) -\int_M|A|^2(a\frac{\rho}{\sqrt{\delta_1}} + \sqrt{\delta_1} l_w)^2-\int_M |A|^2(b\frac{f_{v_0}}{\sqrt{\delta_2}}+\sqrt{\delta_2}l_w)^2\\
& & + b^2\int_M \left(\frac{|A|^2}{\delta_2}-n\right)f_{v_0}^2\\
&<&0
\end{eqnarray*}
\end{proof}
Let us finish this note by stating the following conjecture. In the same way we showed that the space $\Gamma$ is has dimension $2 n+5$ we can show that the space 

$$\Lambda=\{ a +f_w+l_{v}: a\in\mathbb{R}, \, w,v \in \mathbb{R}^{n+2}\quad \}$$

has dimension $2n+5$ when $M$ is neither totally umbilical nor Clifford.  We conjecture that this space contains a $n+4$ dimensional subspace where the bilinear form $(f,g)\longrightarrow \int_MfJ(g)$ is negative definite.

\end{document}